\documentclass[preprint,3p]{elsarticle}



\usepackage[ansinew]{inputenc}
\usepackage{amsmath, amsfonts} 
\usepackage{amsthm, amssymb}
\usepackage[english]{babel}
\usepackage[T1]{fontenc}
\usepackage{lmodern}
\usepackage{graphicx}
\usepackage{subfigure}

\newtheorem{theorem}{Theorem}[section]

\newtheorem{corollary}{Corollary}[section]

\newtheorem{assumption}{Assumption}

\theoremstyle{definition}
\newtheorem{definition}{Definition}
\newtheorem{remark}{Remark}[section]

\usepackage{array}
\usepackage{float}
\usepackage{subfigure}
\usepackage{color}

\newcommand{\guillemets}[1]{``#1''}

\journal{Systems and Control Letters}

\begin{document}

\begin{frontmatter}



\title{Contraction of monotone phase-coupled oscillators}


\author[Alex]{A. Mauroy}
\author[Rodolphe]{R. Sepulchre}

\address[Alex]{Department of Mechanical Engineering, University of California Santa Barbara, Santa Barbara, CA 93106, USA. E-mail address: alex.mauroy@engr.ucsb.edu}
\address[Rodolphe]{Department of Electrical Engineering and Computer Science, University of Liège, B-4000 Liège, Belgium}

\begin{abstract}
This paper establishes a global contraction property for networks of phase-coupled oscillators characterized by a monotone coupling function. The contraction measure is a total variation distance. The contraction property determines the asymptotic behavior of the network, which is either finite-time synchronization or asymptotic convergence to a splay state.
\end{abstract}

\begin{keyword}
synchronization \sep phase models \sep monotone systems \sep contraction


\end{keyword}

\end{frontmatter}


\section{Introduction}

Networks of coupled oscillators are a general paradigm to understand a wealth of natural phenomena \cite{Buck,Peskin,Sherman} as well as an efficient model for the design of engineered systems \cite{Macdonald,Paley}. In the limit of weak coupling, realistic models of limit-cycle oscillators (evolving in high-dimensional spaces) can be reduced to one-dimensional models of phase oscillators (evolving on the circle) coupled through their phase differences \cite{Kuramoto_book,Winfree}. Though more amenable to mathematical analysis, these generic phase-coupled models may exhibit rich and complex ensemble behaviors and have attracted intense research interest in the past decades (e.g. Kuramoto model \cite{Kuramoto3}).

A phase-coupled model is characterized by its \textsl{coupling function} (which is closely related to the \textsl{phase response curve} of the oscillators). The seminal work of Kuramoto \cite{Kuramoto_book} assumes a sinusoidal coupling function. The present paper rather considers a network of oscillators characterized by a monotone coupling function (we denote these oscillators as \textsl{monotone oscillators}). This model was first studied in \cite{Kuramoto3}, through the phase reduction of the popular leaky integrate-and-fire neuron model. Using local analysis, the author investigated the effect of a delay on the network stability and showed that the periodic collective motion of the oscillators becomes quasiperiodic when the delay exceeds a critical value. Since it is known that delayed oscillators lose their monotonicity property, this result suggests a link between the stability properties of the oscillators and their monotonicity properties.

In contrast to the \emph{local} results provided in \cite{Kuramoto3}, we present in this paper a \emph{global} contraction property for networks of monotone oscillators. The result is shown with respect to a $1$-norm which has the interpretation of a total variation distance and which is inspired from our previous studies \cite{Mauroy3,Mauroy}. The contraction property of the model determines the asymptotic behavior of the network. Namely, monotone oscillators either achieve perfect synchronization in finite time or converge to a unique anti-synchronized state (\textsl{splay configuration}).

The paper is organized as follows. Section \ref{sec_monot_osci} introduces the model of monotone phase-coupled oscillators. In Section \ref{sec_contract}, we present our main result on the contraction property of the oscillators. The result is exploited in Section \ref{sec_behavior} to study the collective behaviors of the network. Finally, the paper closes with some concluding remarks in Section \ref{sec_conclu}.

\section{Monotone phase-coupled oscillators}
\label{sec_monot_osci}

We consider a network of identical \textsl{phase-coupled oscillators}. A single (uncoupled) phase oscillator $k$ is characterized by a phase $\theta_k \in \mathbb{S}^1(0,2\pi)$ that evolves on the circle with constant velocity $\dot{\theta}_k=\omega$, where $\omega$ is the natural frequency of the oscillator. Within the network, $N$ phase oscillators are (all-to-all) coupled through their phase differences: they evolve on the $N$-torus $\mathbb{T}^N=\mathbb{S}^1\times \cdots \times \mathbb{S}^1$ according to the canonical phase dynamics
\begin{equation}
\label{gen_form_Kuramoto}
\dot{\theta}_k=\omega+\sum_{\substack{j=1\\j\neq k}}^N \Gamma(\theta_k-\theta_j) \qquad k=1,\dots,N\,.
\end{equation}

Using averaging techniques, every network of weakly-coupled identical limit-cycle oscillators can be reduced to the form \eqref{gen_form_Kuramoto}, with an appropriate coupling function $\Gamma(\cdot)$ that is closely related to the phase response curve of the oscillators \cite{Hoppensteadt,Kuramoto_book}. The phase response curve characterizes the phase sensitivity of the oscillator to a (infinitesimal) perturbation, a quantity that can be either numerically computed or experimentally measured. Since it is a function computed on the (periodic) limit cycle, the phase response curve is $2\pi$-periodic, and so is the coupling function $\Gamma(\cdot)$.\\

We assume that the coupling function satisfies the following monotonicity assumption, in which case the phase-coupled oscillators are called \guillemets{\textsl{monotone oscillators}}.
\begin{assumption}[Monotonicity]
\label{assump_monotone}
The coupling function $\Gamma(\cdot)$ is strictly monotone, i.e. either $\Gamma'(\theta)>0$ $\forall \theta \in (0,2\pi)$ or $\Gamma'(\theta)<0$ $\forall \theta \in (0,2\pi)$, with $\Gamma'$ denoting the first derivative of $\Gamma$.
\end{assumption}
\begin{remark} Assumption \ref{assump_monotone} implies that the coupling function is discontinuous, that is $\Gamma(0^-)\neq \Gamma(0^+)$. However, it is important to note that the $2\pi$-periodicity condition imposes $\Gamma(0)=\Gamma(2\pi)$ (Figure \ref{fig_discont_coupl}).
\end{remark}
\begin{figure}[h]
\begin{center}
\includegraphics[width=7cm]{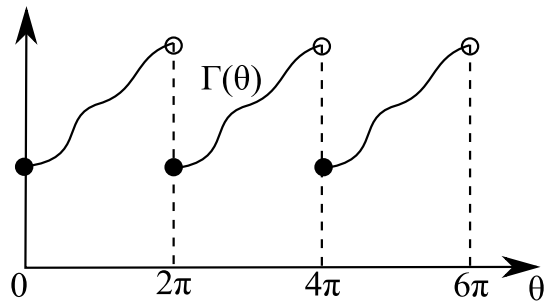}
\caption{The coupling function of monotone oscillators is discontinuous but satisfies the $2\pi$-periodicity condition, so that $\Gamma(0)=\Gamma(2\pi)$.}
\label{fig_discont_coupl}
\end{center}
\end{figure}

The monotonicity property is frequently encountered in spiking oscillators, at least in good approximation. Monotone phase-coupled oscillators were first obtained in the case of (weakly) pulse-coupled leaky integrate-and-fire oscillators \cite{Kuramoto3}. More generally, for a purely impulsive coupling, monotone oscillators appear as the phase reduction of oscillators characterized by a monotone phase response curve, such as van der Pol oscillators with strong relaxation or limit-cycle oscillators near a homoclinic bifurcation \cite{Brown}.\\

\begin{remark}
The analysis of a network of monotone oscillators is well-known when all oscillators are initialized within a \textsl{semicircle}: in this case, a change of coordinates maps the dynamics into $\mathbb{R}^N$ and the model falls into the well-studied category of consensus models on a convex set, see e.g. \cite{Moreau,Sarlette_book_chapter}. In the present paper, the emphasis is on the whole (non-convex) $N$-torus $\mathbb{T}^N$.
\end{remark}

\section{Main result}
\label{sec_contract}

\begin{definition}[Contraction] Let $\mathcal{S}$ be a continuous-time dynamical system defined on a metric space $X$ (with the distance $d$) and let $\mathbf{\Phi}$ denote the flow associated with $\mathcal{S}$, i.e. $\mathbf{\Phi}(\mathbf{x},t)$ is an orbit of $\mathcal{S}$, with $\mathbf{\Phi}(\mathbf{x},0)=\mathbf{x}\in X$. Then $\mathcal{S}$ is contracting (resp. expanding) in $X_0\subseteq X$ with respect to $d$ if
\begin{equation*}
\frac{d}{dt}d\left(\mathbf{\Phi}(\mathbf{x},t),\mathbf{\Phi}(\mathbf{y},t)\right)<0 \quad \textrm{(resp. }>0\textrm{)}
\end{equation*}
for all $\mathbf{x}\neq \mathbf{y}$ and for all $t$ such that $\mathbf{\Phi}(\mathbf{x},t)\in X_0$, $\mathbf{\Phi}(\mathbf{y},t)\in X_0$.
\end{definition}

We will study the contraction of model \eqref{gen_form_Kuramoto} with respect to the distance
\begin{equation}
\label{1_norm}
d(\mathbf{x},\mathbf{y})=\left\|\mathbf{x}-\mathbf{y}\right\|_{(1)}=|x_1-y_1|+\sum_{j=1}^{n-1} |x_{j+1}-y_{j+1}-(x_j-y_j)|+|x_n-y_n|\,.
\end{equation}
This particular $1$-norm distance is motivated by previous results on integrate-and-fire models. For finite populations of leaky integrate-and-fire oscillators, the distance \eqref{1_norm} was successfully used in \cite{Mauroy} to capture the contraction property of the so-called \textsl{firing map} (a discrete-time map that provides snapshots of the network configuration at the successive firings of the oscillators \cite{Mirollo}). For infinite populations, the continuous equivalent of \eqref{1_norm} induces a Lyapunov function for the partial derivative equation that governs the evolution of the population density \cite{Mauroy3}.

The distance has the remarkable interpretation of a total variation distance. Indeed, \eqref{1_norm} corresponds to the total variation of a piecewise linear function that interpolates the values $x_i-y_i$ (Figure \ref{tvd}). In the continuous case (related to infinite populations), this interpretation still holds since the continuous equivalent of \eqref{1_norm} is the $L_1$-norm of the derivative \cite{Dunford}.
\begin{figure}[h]
\begin{center}
\includegraphics[width=5cm]{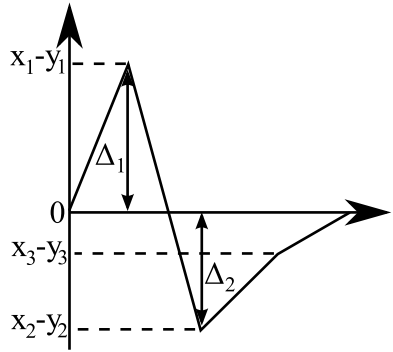}
\caption{The distance \eqref{1_norm} is interpreted as the total variation of the linear interpolation of the values $x_i-y_i$. In the example of the figure, the distance is given by $\left\|\mathbf{x}-\mathbf{y}\right\|_{(1)}=2\Delta_1+2\Delta_2$, a quantity which corresponds to the total variation of the piecewise linear function.
}
\label{tvd}
\end{center}
\end{figure}

Before establishing our main result, we must notice that the dynamics \eqref{gen_form_Kuramoto} are invariant with respect to a rigid rotation of the oscillators, a property which prevents a contraction in the full space $\mathbb{T}^N$. We therefore remove this marginally stable rigid mode and consider the equivalent dynamics expressed in a rotating frame associated with an oscillator (without loss of generality, we choose oscillator $1$). Denoting the phase differences by $\tilde{\theta}_k=\theta_{k+1}-\theta_1 \in [0,2\pi]$ for $k=1,\dots,N-1$, the dynamics \eqref{gen_form_Kuramoto} are rewritten as the $(N-1)$-dimensional dynamics
\begin{equation}
\label{phase_dyn_modified}
\dot{\tilde{\theta}}_k=\Gamma(\tilde{\theta}_k)+\sum_{\substack{j=1\\j\neq k}}^{N-1} \Gamma(\tilde{\theta}_k-\tilde{\theta}_j)- \sum_{j=1}^{N-1} \Gamma(-\tilde{\theta}_j)\,.
\end{equation}
Without loss of generality, we assume that the oscillators satisfy the phase ordering $\tilde{\theta}_k \leq \tilde{\theta}_{k+1}$, which does not change over time since the oscillators are identical. Solutions $\mathbf{\tilde{\Theta}}(t)=(\tilde{\theta}_1(t),\dots,\tilde{\theta}_{N-1}(t))$ of \eqref{phase_dyn_modified} do not evolve in $\mathbb{T}^{N-1}$, but only in the closure $\bar{\mathcal{C}}$ of the open cone $\mathcal{C}=\{\mathbf{\tilde{\Theta}}\in (0,2\pi)^{N-1}|\tilde{\theta}_k< \tilde{\theta}_{k+1}, k=1\dots, N-2\}$. An orbit that reaches the boundary of $\mathcal{C}$ corresponds to the synchronization of at least two oscillators.\\

Now, under a mild technical assumption on the curvature of the coupling function, we are in position to prove the contraction property of the dynamics \eqref{phase_dyn_modified} with respect to the $1$-norm distance \eqref{1_norm}. The result is summarized in the following theorem.
\begin{theorem}
\label{theo_contract_consensus}
Consider a coupling function that satisfies (i) Assumption \ref{assump_monotone} (monotonicity) and (ii) either $\Gamma''(\theta)\geq 0$ $\forall \theta \in (0,2\pi)$ or $\Gamma''(\theta)\leq 0$ $\forall \theta \in (0,2\pi)$. Then, \eqref{phase_dyn_modified} is either contracting (if $\Gamma'<0$) or expanding (if $\Gamma'>0$) in $\mathcal{C}$ with respect to \eqref{1_norm}.

\end{theorem}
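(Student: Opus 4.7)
The strategy is to compute the derivative of $d$ along two distinct trajectories and show it is strictly negative in $\mathcal{C}$ when $\Gamma' < 0$ (the case $\Gamma' > 0$ is symmetric). Working in the original variables of \eqref{gen_form_Kuramoto}, set $v_k := \theta_k^{(1)} - \theta_k^{(2)}$ for $k = 1, \ldots, N$. A direct calculation shows that the distance \eqref{1_norm} in the reduced variables $\tilde\theta_k$ coincides with the cyclic total variation $d = \sum_{k=1}^{N}|\delta_k|$, where $\delta_k := v_{k+1} - v_k$ with indices taken cyclically ($v_{N+1} \equiv v_1$); the rotation invariance of the distance is then manifest. At any time for which no $\delta_k$ vanishes,
$$
\dot d = \sum_{k=1}^N s_k\,\dot\delta_k,\qquad s_k := \mathrm{sign}(\delta_k),
$$
and the exceptional measure-zero set of times is handled by passing to the upper Dini derivative.

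From \eqref{gen_form_Kuramoto}, $\dot v_k = \sum_{j\neq k}[\Gamma(\theta_k^{(1)} - \theta_j^{(1)}) - \Gamma(\theta_k^{(2)} - \theta_j^{(2)})]$; the integral form of the mean value theorem rewrites each coupling difference as $c_{k,j}(v_k - v_j)$ with $c_{k,j} := \int_0^1 \Gamma'(\cdot)\,ds$ uniformly negative by Assumption \ref{assump_monotone}. Consequently
$$
\dot\delta_k = (c_{k+1,k}+c_{k,k+1})\delta_k + \sum_{j\notin\{k,k+1\}}\bigl[c_{k+1,j}(v_{k+1}-v_j) - c_{k,j}(v_k - v_j)\bigr].
$$
The first term is the \emph{diagonal} contribution. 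Multiplied by $s_k$ and summed over $k$ it yields $\sum_k (c_{k+1,k}+c_{k,k+1})|\delta_k| < 0$, which is manifestly negative and already captures the essence of the contraction.

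The substance of the proof lies in the \emph{far} contribution, which I would rewrite via the algebraic identity
$$
c_{k+1,j}(v_{k+1}-v_j) - c_{k,j}(v_k-v_j) = c_{k,j}\,\delta_k + (c_{k+1,j}-c_{k,j})(v_{k+1}-v_j).
$$
The first piece contributes $|\delta_k|\sum_{j\notin\{k,k+1\}}c_{k,j} < 0$ after summation and thus reinforces the diagonal contraction. The second piece is exactly where assumption (ii) enters: $c_{k+1,j}-c_{k,j}$ can be written as a double integral of $\Gamma''$ over the rectangle swept out by shifting the mean-value path by the strictly positive phase gap $\theta_{k+1}-\theta_k > 0$ (guaranteed by $\tilde\Theta \in \mathcal{C}$), so its sign is fixed by $\mathrm{sign}(\Gamma'')$. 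The main obstacle I foresee is to show that the double sum $\sum_k s_k \sum_{j\notin\{k,k+1\}}(c_{k+1,j}-c_{k,j})(v_{k+1}-v_j)$ is non-positive; I expect this to emerge from an index-swap rearrangement (pairing each $(k,j)$ contribution with a reflected counterpart) that exploits the cyclic structure of $\{s_k\}$ and the fixed sign of $\Gamma''$, while carefully accounting for the $2\pi$-periodicity of $\Gamma$ when mean-value paths cross the discontinuity at $0 \equiv 2\pi$. Combining the three pieces then gives $\dot d < 0$ throughout $\mathcal{C}$, which is the claimed contraction.
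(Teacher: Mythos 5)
Your setup is correct and in places cleaner than the paper's: the cyclic total-variation form $d=\sum_k|\delta_k|$, the signed-sum expression for $\dot d$, the integral mean-value coefficients $c_{k,j}<0$, and the splitting of $\dot\delta_k$ into a diagonal part, a reinforcing part $c_{k,j}\delta_k$, and a curvature remainder are all sound, and the two strictly negative pieces you extract do capture part of what the paper obtains. (The paper instead writes the distance as an alternating sum over \emph{critical} indices where $\mathrm{sign}(\delta)$ changes, as in \eqref{equa_dist_1_chap7}, applies the mean value theorem to each whole coupling difference, and then pairs every positive term $T^{(k)}_j$ with a partner in the adjacent critical block.) However, the proof is not complete: the entire difficulty of the theorem is concentrated in the double sum $\sum_k s_k\sum_{j\notin\{k,k+1\}}(c_{k+1,j}-c_{k,j})(v_{k+1}-v_j)$, and you only \emph{conjecture} that an ``index-swap rearrangement'' makes it non-positive. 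The sign of $c_{k+1,j}-c_{k,j}$ is indeed fixed by $\Gamma''$ (since both endpoints of the mean-value segment shift by the positive phase gaps of the two orbits), but $v_{k+1}-v_j$ and $s_k$ are both uncontrolled, so individual terms have either sign and a genuinely global cancellation argument is needed. That argument is precisely where all the work lies in the paper: the orderings \eqref{rel_xi_1}--\eqref{rel_xi_3} of the mean-value points, the pairing of positive terms across consecutive critical blocks via \eqref{rel_kappa}, and the separate bookkeeping for $N_c$ even versus odd (where the residual $T_\Sigma$ terms must be absorbed by \eqref{rel_xi_4_a}--\eqref{rel_xi_4_b}). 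Nothing in your proposal substitutes for this step, and it is not evident that your particular remainder, which singles out $v_{k+1}-v_j$ rather than a difference anchored at an extremal index, even admits the term-by-term pairing you anticipate.

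Two secondary points. First, the times at which some $\delta_k=0$ while the orbits differ are not merely a measure-zero set to be dismissed: at such a point the upper Dini derivative of $|\delta_k|$ equals $|\dot\delta_k|\geq 0$, so strict negativity of $\sum_k s_k\dot\delta_k$ for an arbitrary sign convention does not immediately yield $\dot d<0$ there; the paper's critical-index formulation avoids this because indices where $\delta$ does not change sign simply do not appear in \eqref{equa_dist_1_chap7}. Second, your remark that the mean-value paths may ``cross the discontinuity at $0\equiv 2\pi$'' deserves resolution rather than a caveat: for two orbits in $\mathcal{C}$ the representatives of $\theta_k-\theta_j$ can always be chosen in $(0,2\pi)$ so that the segment avoids the discontinuity, which is exactly why the paper restricts the relations on the $\xi$'s to the open cone and excludes its boundary.
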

\begin{proof}

We prove that the dynamics are contracting when $\Gamma'<0$. Without loss of generality, we suppose that $\Gamma''\leq 0$ and, considering the two orbits $\mathbf{\tilde{\Theta}}=(\tilde{\theta}_1,\dots,\tilde{\theta}_{N-1})$ and \mbox{$\mathbf{\tilde{\Psi}}=(\tilde{\psi}_1,\dots,\tilde{\psi}_{N-1})$}, we assume that $\tilde{\theta}_1-\tilde{\psi}_1\leq 0$. The proof of the other cases follows on similar lines.

We consider the distance \eqref{1_norm} between $\mathbf{\tilde{\Theta}}$ and $\mathbf{\tilde{\Psi}}$ and remove the absolute values to obtain
\begin{equation}
\label{equa_dist_1_chap7}
\left\|\mathbf{\tilde{\Theta}}-\mathbf{\tilde{\Psi}}\right\|_{(1)}=\sum_{k=1}^{N_c} (-1)^k \left(\tilde{\theta}_{\mathcal{K}(k)}-\tilde{\psi}_{\mathcal{K}(k)}\right)\,.
\end{equation}
The subscripts $\mathcal{K}(k)$ correspond to $N_c$ \textsl{critical oscillators} ($1\leq N_c \leq N-1$) whose phases are characterized by a change of the sign of $\tilde{\theta}_{j+1}-\tilde{\psi}_{j+1}-(\tilde{\theta}_j-\tilde{\psi}_j)$. Formally, the map \mbox{$\mathcal{K}:\{1,\dots,N_c\}\mapsto \{1,\dots,N-1\}$} is defined so that $\mathcal{K}(k-1)<\mathcal{K}(k)$ and so that
\begin{equation*}
\left[\tilde{\theta}_{\mathcal{K}(k)+1}-\tilde{\psi}_{\mathcal{K}(k)+1}-\left(\tilde{\theta}_{\mathcal{K}(k)}-\tilde{\psi}_{\mathcal{K}(k)}\right)\right]\left[\tilde{\theta}_{\mathcal{K}(k)}-\tilde{\psi}_{\mathcal{K}(k)}-\left(\tilde{\theta}_{\mathcal{K}(k)-1}-\tilde{\psi}_{\mathcal{K}(k)-1}\right)\right]< 0\,,
\end{equation*}
with $\tilde{\theta}_0-\tilde{\psi}_0=\tilde{\theta}_N-\tilde{\psi}_N=0$. The index $\mathcal{K}(k)$ corresponds to a maximum of $\tilde{\theta}_j-\tilde{\psi}_j$ when $k$ is even and to a minimum when $k$ is odd (Figure \ref{fig_critical_indices}). In particular, one has
\begin{equation}
\label{rel_kappa}
(-1)^k \left(\tilde{\theta}_{\mathcal{K}(k)}-\tilde{\psi}_{\mathcal{K}(k)}\right) > (-1)^k \left(\tilde{\theta}_{\mathcal{K}(k\pm 1)}-\tilde{\psi}_{\mathcal{K}(k\pm 1)}\right) \,,
\end{equation}
assuming that $\tilde{\theta}_{\mathcal{K}(0)}-\tilde{\psi}_{\mathcal{K}(0)}=\tilde{\theta}_{\mathcal{K}(N_c+1)}-\tilde{\psi}_{\mathcal{K}(N_c+1)}=0$.
\begin{figure}[h]
\begin{center}
\subfigure[$k$ even]{\includegraphics[width=4cm]{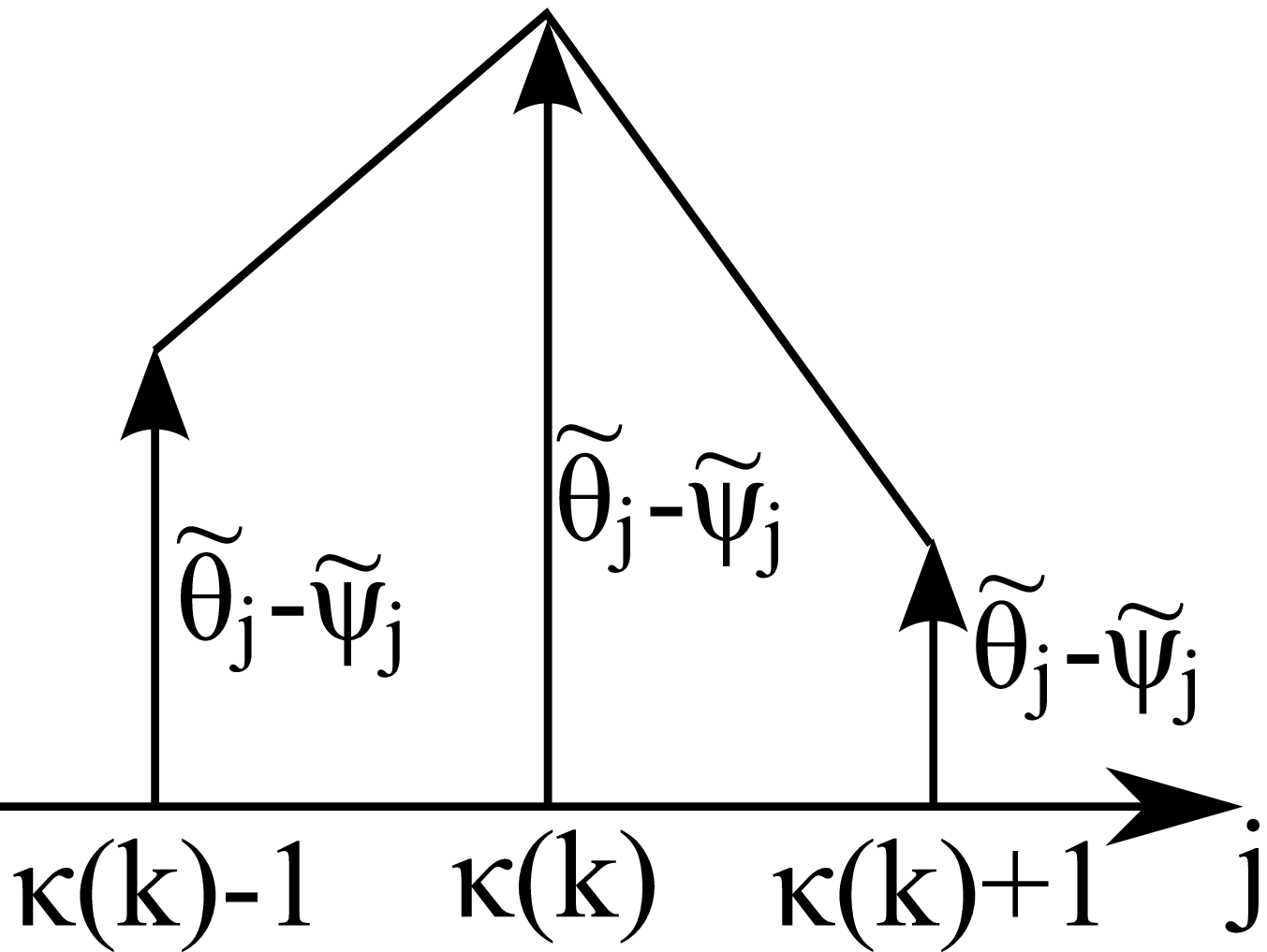}}
\hspace{1cm}
\subfigure[$k$ odd]{\includegraphics[width=4cm]{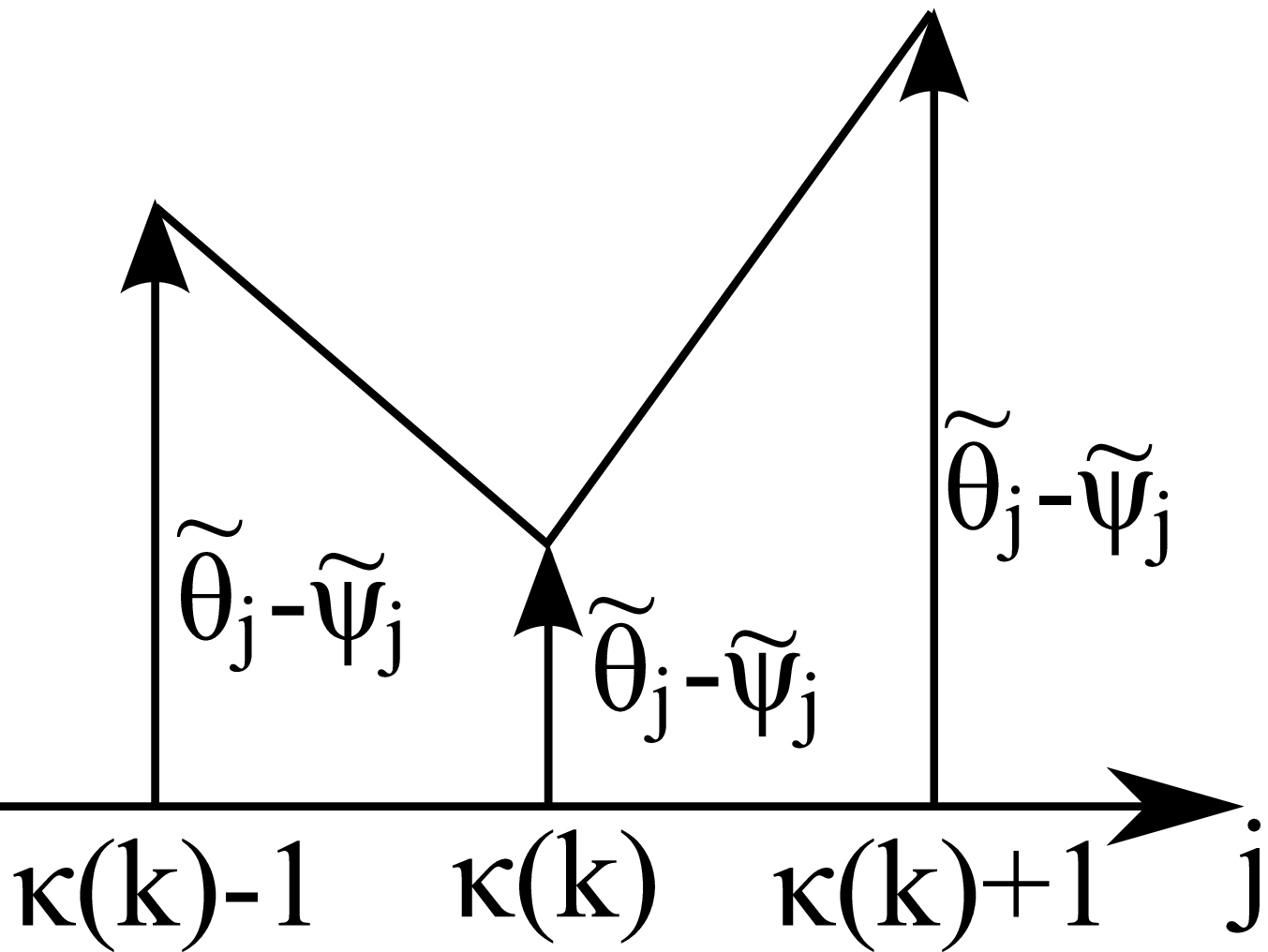}}
\caption{The critical oscillators correspond to extremal values of $\tilde{\theta}_j-\tilde{\psi}_j$.}
\label{fig_critical_indices}
\end{center}
\end{figure}

The time derivative of \eqref{equa_dist_1_chap7} yields
\begin{equation}
\label{deriv_terms_k}
\frac{d}{dt}\left\|\mathbf{\tilde{\Theta}}-\mathbf{\tilde{\Psi}}\right\|_{(1)}=\sum_{k=1}^{N_c} (-1)^k \left(\dot{\tilde{\theta}}_{\mathcal{K}(k)}-\dot{\tilde{\psi}}_{\mathcal{K}(k)}\right) \triangleq \sum_{k=1}^{N_c} T^{(k)}\,.
\end{equation}
Next, we consider separately each term $T^{(k)}$. It follows from \eqref{phase_dyn_modified} that
\begin{equation*}
\begin{split}
T^{(k)}=(-1)^k\, \Bigg[\Gamma(\tilde{\theta}_{\mathcal{K}(k)})-\Gamma(\tilde{\psi}_{\mathcal{K}(k)}) & +\sum_{\substack{j=1\\j\neq \mathcal{K}(k)}}^{N-1} \Gamma(\tilde{\theta}_{\mathcal{K}(k)}-\tilde{\theta}_j)-\Gamma(\tilde{\psi}_{\mathcal{K}(k)}-\tilde{\psi}_j)\\
& -\sum_{j=1}^{N-1} \Gamma(-\tilde{\theta}_j)-\Gamma(-\tilde{\psi}_j) \Bigg]\,.
\end{split}
\end{equation*}
Using the mean value theorem, we obtain
\begin{equation}
\label{T_k_tot}
\begin{split}
T^{(k)}&=\underbrace{\Gamma'(\xi_{\mathcal{K}(k)}) (-1)^k \left(\tilde{\theta}_{\mathcal{K}(k)}-\tilde{\psi}_{\mathcal{K}(k)}\right)}_{\displaystyle \triangleq T^{(k)}_{\mathcal{K}(k)}}\\
&+ \sum_{\substack{j=1\\j\neq \mathcal{K}(k)}}^{N-1}  \underbrace{\Gamma'(\xi_{\mathcal{K}(k),j}) (-1)^k  \left(\tilde{\theta}_{\mathcal{K}(k)}-\tilde{\psi}_{\mathcal{K}(k)}-(\tilde{\theta}_j-\tilde{\psi}_j)\right)}_{\displaystyle \triangleq T^{(k)}_j} \\
& +(-1)^k \underbrace{\left(-\sum_{j=1}^{N-1} \Gamma(-\tilde{\theta}_j)-\Gamma(-\tilde{\psi}_j)\right)}_{\displaystyle \triangleq T_\Sigma} \,,
\end{split}
\end{equation}
with $\xi_{\mathcal{K}(k)}\in [\tilde{\theta}_{\mathcal{K}(k)},\tilde{\psi}_{\mathcal{K}(k)}]$ or $ \in [\tilde{\psi}_{\mathcal{K}(k)},\tilde{\theta}_{\mathcal{K}(k)}]$ and $\xi_{\mathcal{K}(k),j}\in [(\tilde{\theta}_{\mathcal{K}(k)}-\tilde{\theta}_j) \bmod 2\pi,(\tilde{\psi}_{\mathcal{K}(k)}-\tilde{\psi}_j) \bmod 2\pi]$ or $\in [(\tilde{\psi}_{\mathcal{K}(k)}-\tilde{\psi}_j) \bmod 2\pi,(\tilde{\theta}_{\mathcal{K}(k)}-\tilde{\theta}_j) \bmod 2\pi]$. Since $\Gamma''\leq0$, the values satisfy
\begin{eqnarray}
\label{rel_xi_1}
\xi_{\mathcal{K}(k)} \leq \xi_{\mathcal{K}(k+1)} &&\qquad k<N_c\,,\\
\label{rel_xi_2}
\xi_{\mathcal{K}(k),j} \leq \xi_{\mathcal{K}(k+1),j} &&\qquad k<N_c\,, \quad j<\mathcal{K}(k) \textrm{ or } j>\mathcal{K}(k+1) \,,\\
\label{rel_xi_3}
\xi_{\mathcal{K}(N_c),j} \leq \xi_{\mathcal{K}(1),j} &&\qquad \mathcal{K}(1)<j<\mathcal{K}(N_c)\,.
\end{eqnarray}
Because of the discontinuity of $\Gamma$, these relationships are not valid for an orbit that lies in the boundary of $\mathcal{C}$.\\

Disregarding the terms $T_\Sigma$, we show that each positive term $T_j^{(k)}$ of $T^{(k)}$ (in \eqref{T_k_tot}) can be associated with a term $T_j^{(k+1)}$ of $T^{(k+1)}$ so that the addition of both terms $T_j^{(k)}+T_j^{(k+1)}$ is negative.
\begin{itemize}
\item If the term $T_{\mathcal{K}(k)}^{(k)}\geq 0$, it is associated with the term $T_{\mathcal{K}(k+1)}^{(k+1)}$. It follows from \eqref{rel_xi_1} and from $\Gamma''\leq 0$ that $\Gamma'(\xi_{\mathcal{K}(k+1)})\leq \Gamma'(\xi_{\mathcal{K}(k)})<0$. In addition, \eqref{rel_kappa} implies
\begin{equation*}
-(-1)^{k+1} \left(\tilde{\theta}_{\mathcal{K}(k+ 1)}-\tilde{\psi}_{\mathcal{K}(k+ 1)} \right) < (-1)^k \left(\tilde{\theta}_{\mathcal{K}(k)}-\tilde{\psi}_{\mathcal{K}(k)} \right) \leq 0\,.
\end{equation*}
Then, we obtain $T_{\mathcal{K}(k)}^{(k)}+T_{\mathcal{K}(k+1)}^{(k+1)}<0$.
\item If the term $T_j^{(k)}\geq 0$ for any $j\neq \mathcal{K}(k)$, it is associated with the term $T_j^{(k+1)}$, $j\neq \mathcal{K}(k+1)$. (According to \eqref{rel_kappa} and the definition of the critical oscillators, one has never $T_j^{(k)}\geq 0$ for $\mathcal{K}(k)<j\leq\mathcal{K}(k+1)$.) It follows from \eqref{rel_xi_2} and from $\Gamma''\leq 0$ that $\Gamma'(\xi_{\mathcal{K}(k+1),j})\leq \Gamma'(\xi_{\mathcal{K}(k),j})<0$. In addition, \eqref{rel_kappa} implies 
\begin{equation*}
-(-1)^{k+1} \left(\tilde{\theta}_{\mathcal{K}(k+ 1)}-\tilde{\psi}_{\mathcal{K}(k+ 1)}-(\tilde{\theta}_j-\tilde{\psi}_j) \right) < (-1)^k \left(\tilde{\theta}_{\mathcal{K}(k)}-\tilde{\psi}_{\mathcal{K}(k)}-(\tilde{\theta}_j-\tilde{\psi}_j) \right) \leq 0 \,.
\end{equation*}
Then, we obtain $T_j^{(k)}+T_j^{(k+1)}<0$.
\end{itemize}

For $k=N_c$, the terms of $T^{(k)}$ cannot be associated with the terms of $T^{(k+1)}$. Then, it remains to consider the terms $T_j^{(N_c)}$ and the terms $(-1)^k T_\Sigma$. We distinguish two cases: the case $N_c$ even and the case $N_c$ odd.

\textbf{Case $\mathbf{N_c}$ even.} The addition of the $N_c$ terms $(-1)^k T_\Sigma$ yields $\sum_{k=1}^{N_c} (-1)^k T_\Sigma=0$.
In addition, the relationship \eqref{rel_kappa} implies 
\begin{equation}
\label{rel_N_c}
\tilde{\theta}_{\mathcal{K}(1)}-\tilde{\psi}_{\mathcal{K}(1)} < 0 \qquad \tilde{\theta}_{\mathcal{K}(N_c)}-\tilde{\psi}_{\mathcal{K}(N_c)} >0
\end{equation}
so that the term $T^{(N_c)}_{\mathcal{K}(N_c)}< 0$.
 
If the term $T_j^{(N_c)}\geq 0$ for any $j\neq \mathcal{K}(N_c)$, it is associated with the term $T_j^{(1)}$, $j\neq \mathcal{K}(1)$. (According to \eqref{rel_N_c} and the definition of the critical oscillators, one has never $T_j^{(N_c)}\geq 0$ for $j\leq \mathcal{K}(1)$ or for $j \geq \mathcal{K}(N_c)$.) It follows from \eqref{rel_xi_3} and from $\Gamma''\leq 0$ that $\Gamma'(\xi_{\mathcal{K}(1),j})\leq \Gamma'(\xi_{\mathcal{K}(N_c),j})<0$. In addition, the inequalities \eqref{rel_N_c} imply
\begin{equation*}
-(-1)^1 \left(\tilde{\theta}_{\mathcal{K}(1)}-\tilde{\psi}_{\mathcal{K}(1)}-(\tilde{\theta}_j-\tilde{\psi}_j) \right) < (-1)^{N_c} \left(\tilde{\theta}_{\mathcal{K}(N_c)}-\tilde{\psi}_{\mathcal{K}(N_c)}-(\tilde{\theta}_j-\tilde{\psi}_j) \right) \leq 0\,.
\end{equation*}
Then, we obtain $T_j^{(N_c)}+T_j^{(1)}<0$.

\textbf{Case $\mathbf{N_c}$ odd.} The addition of the $N_c$ terms $(-1)^k T_\Sigma$ yields
\begin{equation*}
\sum_{k=1}^{N_c} (-1)^k T_\Sigma=-T_\Sigma=-\sum_{j=1}^{N-1} \Gamma'(\xi_{-j}) \left(\tilde{\theta}_j-\tilde{\psi}_j\right)\,
\end{equation*}
with $\xi_{-j} \in [2\pi-\tilde{\theta}_j,2\pi-\tilde{\psi}_j]$ or $\in [2\pi-\tilde{\psi}_j,2\pi-\tilde{\theta}_j]$. Since $\Gamma''\leq 0$, the values satisfy
\begin{eqnarray}
\xi_{\mathcal{K}(N_c),j}\leq \xi_{-j} && \qquad j<\mathcal{K}(N_c)   \,, \label{rel_xi_4_a} \\
\xi_{-j} \leq \xi_{\mathcal{K}(1),j} && \qquad j>\mathcal{K}(1) \,. \label{rel_xi_4_b}
\end{eqnarray}

In addition, the relationship \eqref{rel_kappa} implies
\begin{equation}
\label{rel_N_c2}
\tilde{\theta}_{\mathcal{K}(1)}-\tilde{\psi}_{\mathcal{K}(1)} < 0 \qquad \tilde{\theta}_{\mathcal{K}(N_c)}-\tilde{\psi}_{\mathcal{K}(N_c)} <0
\end{equation}
so that the term $T^{(N_c)}_{\mathcal{K}(N_c)}< 0$.

\begin{itemize}
\item If the term $T_j^{(N_c)}\geq 0$ for any $j\neq \mathcal{K}(N_c)$, it is associated with the term $-\Gamma'(\xi_{-j}) (\tilde{\theta}_j-\tilde{\psi}_j)$. (According to \eqref{rel_N_c2} and the definition of the critical oscillators, one has never $T_j^{(N_c)}\geq 0$ for $j\geq \mathcal{K}(N_c)$.) It follows from \eqref{rel_xi_4_a} and from $\Gamma'' \leq 0$ that $\Gamma'(\xi_{-j})\leq \Gamma'(\xi_{\mathcal{K}(N_c),j})<0$. In addition, \eqref{rel_N_c2} implies
\begin{equation*}
\tilde{\theta}_j-\tilde{\psi}_j < (-1)^{N_c} \left(\tilde{\theta}_{\mathcal{K}(N_c)}-\tilde{\psi}_{\mathcal{K}(N_c)}-(\tilde{\theta}_j-\tilde{\psi}_j) \right) \leq 0\,.
\end{equation*}
Then, we obtain $T_j^{(N_c)}-\Gamma'(\xi_{-j}) (\tilde{\theta}_j-\tilde{\psi}_j)<0$.

\item If the term $-\Gamma'(\xi_{-j}) (\tilde{\theta}_j-\tilde{\psi}_j)\geq 0$, it is associated with the term $T_j^{(1)}$, $j\neq \mathcal{K}(1)$. (According to \eqref{rel_N_c2} and the definition of the critical oscillators, one has never $-\Gamma'(\xi_{-j}) (\tilde{\theta}_j-\tilde{\psi}_j)\geq 0$ for $j\leq \mathcal{K}(1)$.) It follows from \eqref{rel_xi_4_b} and from $\Gamma''\leq 0$ that $\Gamma'(\xi_{\mathcal{K}(1),j})\leq \Gamma'(\xi_{-j})<0$. In addition, \eqref{rel_N_c2}
implies
\begin{equation*}
-(-1)^1 \left(\tilde{\theta}_{\mathcal{K}(1)}-\tilde{\psi}_{\mathcal{K}(1)}-(\tilde{\theta}_j-\tilde{\psi}_j) \right) < -(\tilde{\theta}_j-\tilde{\psi}_j) \leq 0\,.
\end{equation*}
Then, we obtain $-\Gamma'(\xi_{-j}) (\tilde{\theta}_j-\tilde{\psi}_j)+T_j^{(1)}<0$.\\
\end{itemize}

Finally, every term of \eqref{deriv_terms_k}-\eqref{T_k_tot} is either negative or can be associated with a unique term so that the addition of both is negative. Thus, one has
\begin{equation*}
\frac{d}{dt}\left\|\mathbf{\tilde{\Theta}}-\mathbf{\tilde{\Psi}}\right\|_{(1)}=\sum_{k=1}^{N_c} T^{(k)} <0\,.
\end{equation*}

For the other situations ($\Gamma''\geq 0$, $\Gamma'>0$), the proof follows on similar lines (except that the terms of $T^{(k)}$ might be associated with the terms of $T^{(k-1)}$ instead).

\end{proof}

Theorem \ref{theo_contract_consensus} proves the contraction property inside the open cone $\mathcal{C}$, but not for an orbit that lies in the boundary of $\mathcal{C}$. However, the contraction property also holds for each of the $(N-2)$-dimensional cones $\bar{\mathcal{C}}_i'= \{\mathbf{\tilde{\Theta}}\in\bar{\mathcal{C}}|\tilde{\theta}_{i-1}=\tilde{\theta}_{i}\}$, $i=1,\dots,N$ (with $\tilde{\theta}_0=0$ and $\tilde{\theta}_N=2\pi$), that partition the boundary of $\mathcal{C}$. A cone $\bar{\mathcal{C}}'_i$ corresponds to the synchronization of two oscillators and is therefore invariant under the dynamics \eqref{phase_dyn_modified} ---~since two synchronized oscillators remain synchronized forever. The result on the contraction of \eqref{phase_dyn_modified} in the open cone $\mathcal{C}'_i$ ---~defined as the interior of $\bar{\mathcal{C}}_i'$~--- is summarized in the following corollary.

\begin{corollary}
\label{corol}
Under the same assumptions as Theorem \ref{theo_contract_consensus}, the dynamics \eqref{phase_dyn_modified} defined in a cone $\bar{\mathcal{C}}'_i$ ($i=1,\dots,N$) are either contracting (if $\Gamma'<0$) or expanding (if $\Gamma'>0$) in the open cone $\mathcal{C}'_i$ with respect to \eqref{1_norm}.
\end{corollary}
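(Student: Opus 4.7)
The plan is to reduce Corollary \ref{corol} to an adaptation of the proof of Theorem \ref{theo_contract_consensus} on the invariant manifold $\bar{\mathcal{C}}'_i$. First I would verify that $\bar{\mathcal{C}}'_i$ is invariant: the constraint $\tilde\theta_{i-1}=\tilde\theta_i$ (with the convention $\tilde\theta_0=0$, $\tilde\theta_N=2\pi$) forces the two merged oscillators to receive identical all-to-all coupling inputs, so their velocities coincide and the synchronization persists for all time, as already noted above Corollary \ref{corol}. The flow restricted to $\mathcal{C}'_i$ is therefore an $(N-2)$-parameter dynamics in which one consecutive pair of phase differences is permanently glued together while every other consecutive pair remains strictly ordered.

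For two orbits $\mathbf{\tilde\Theta},\mathbf{\tilde\Psi}\in\mathcal{C}'_i$, the $i$-th slot of the distance \eqref{1_norm}, namely $|(\tilde\theta_i-\tilde\psi_i)-(\tilde\theta_{i-1}-\tilde\psi_{i-1})|$, vanishes identically. I would then reproduce the critical-oscillator decomposition \eqref{equa_dist_1_chap7} on the remaining non-degenerate slots: the map $\mathcal{K}$ simply cannot select the vanishing slot as an extremum, so the sign pattern $(-1)^k$ and the mean-value-theorem expansion \eqref{T_k_tot} carry over verbatim to the reduced configuration, with the dynamics of the merged pair simply producing identical $\Gamma(0)$ contributions that drop out of every difference $\dot{\tilde\theta}_{\mathcal{K}(k)}-\dot{\tilde\psi}_{\mathcal{K}(k)}$.

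The key point that must be checked, and the main obstacle, is that the $\xi$-ordering inequalities \eqref{rel_xi_1}--\eqref{rel_xi_3} remain valid in the presence of the synchronized pair. These could only fail if some mean-value argument $\xi$ were to coincide with the discontinuity of $\Gamma$, which happens precisely when a phase difference $\tilde\theta_a-\tilde\theta_b$ reaches $0$ or $2\pi$. Within the open cone $\mathcal{C}'_i$, this can occur only for the merged pair itself, but on that pair the contribution on each orbit is $\Gamma(0)-\Gamma(0)=0$, so no $\xi$ is ever produced at the discontinuity. All remaining pairwise differences stay strictly inside $(0,2\pi)$ on $\mathcal{C}'_i$, so $\Gamma$ is smooth on the relevant subintervals and the monotonicity of $\Gamma'$ (given by the sign of $\Gamma''$) yields the required orderings. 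After this verification, the term-by-term pairing argument used in Theorem \ref{theo_contract_consensus} produces $\tfrac{d}{dt}\|\mathbf{\tilde\Theta}-\mathbf{\tilde\Psi}\|_{(1)}<0$ on $\mathcal{C}'_i$, establishing the claim; the expanding case $\Gamma'>0$ is handled identically by pairing each $T^{(k)}_j$ with $T^{(k-1)}_j$ rather than $T^{(k+1)}_j$.
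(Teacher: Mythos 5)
Your overall strategy --- restrict the flow to the invariant face, rerun the critical-oscillator decomposition and the mean-value-theorem expansion, and check that the discontinuity of $\Gamma$ causes no harm because the merged pair's mutual coupling contributes $\Gamma(0)-\Gamma(0)=0$ --- is exactly the strategy of the paper, and your observation that the $\xi$-orderings survive on the open face (since all other pairwise differences stay strictly inside $(0,2\pi)$) is the correct explanation of why the boundary obstruction noted after \eqref{rel_xi_3} disappears there. However, there is a genuine gap in your central claim that the expansion \eqref{T_k_tot} ``carries over verbatim'' with the merged pair ``producing identical $\Gamma(0)$ contributions that drop out of every difference.'' The $\Gamma(0)$ terms appear only in the equations of the two merged oscillators themselves. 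For every \emph{other} oscillator $k$, the merged pair contributes $\Gamma(\tilde\theta_k-\tilde\theta_{i-1})+\Gamma(\tilde\theta_k-\tilde\theta_i)=2\Gamma(\tilde\theta_k-\tilde\theta_i)$, a state-dependent term that does not cancel between the two orbits; likewise the term $\Gamma(-\tilde\theta_i)$ in the last sum of \eqref{phase_dyn_modified} is doubled, and for $i\in\{1,N\}$ it is the diagonal term $\Gamma(\tilde\theta_k)$, hence $T^{(k)}_{\mathcal{K}(k)}$, that is doubled. So the reduced expansion is \eqref{T_k_tot} with certain terms multiplied by $2$, not \eqref{T_k_tot} itself.

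The corollary still holds, but precisely because of a fact you never verify: the pairing in the proof of Theorem \ref{theo_contract_consensus} always matches a term $T^{(k)}_j$ with a partner carrying the \emph{same} second index $j$ (namely $T^{(k+1)}_j$, $T^{(1)}_j$, or $-\Gamma'(\xi_{-j})(\tilde\theta_j-\tilde\psi_j)$), so the doubled terms are only ever combined with other doubled terms, and the common factor $2$ preserves every sign conclusion. This check is essentially the entire content of the paper's proof of Corollary \ref{corol}, and it is the step your argument skips by asserting that nothing changes in the expansion.
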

\begin{proof}
The proof is a  straightforward corollary of the proof of Theorem \ref{theo_contract_consensus}.\\
\textbf{In the cone $\mathbf{\mathcal{C}'_1}$.} Since $\tilde{\theta}_1=0$, the dynamics \eqref{phase_dyn_modified} are reduced to the $(N-2)$-dimensional dynamics
\begin{equation*}
\dot{\tilde{\theta}}_k=2\Gamma(\tilde{\theta}_k)+\sum_{\substack{j=2\\j\neq k}}^{N-1} \Gamma(\tilde{\theta}_k-\tilde{\theta}_j)- \sum_{j=2}^{N-1} \Gamma(-\tilde{\theta}_j)-\Gamma(0) \quad k=2,\dots,N-1\,,
\end{equation*}
which are similar to the dynamics \eqref{phase_dyn_modified} in dimension $N-2$. Then, the proof follows on similar lines as the proof of Theorem \ref{theo_contract_consensus}. The only difference is that the term $T^{(k)}_{\mathcal{K}(k)}$ in \eqref{T_k_tot} is replaced by $2\, T^{(k)}_{\mathcal{K}(k)}$, but this modification does not affect the validity of the proof. (In the proof, each term $2\, T^{(k)}_{\mathcal{K}(k)}$ is exclusively associated with another term $2\,T^{(k+1)}_{\mathcal{K}(k+1)}$.)\\
\textbf{In the cone $\mathbf{\mathcal{C}'_i}$, $\mathbf{i=2,\dots,N-1}$.} Since $\tilde{\theta}_i=\tilde{\theta}_{i-1}$, the dynamics \eqref{phase_dyn_modified} are reduced to the $(N-2)$-dimensional dynamics
\begin{equation*}
\begin{cases}
\displaystyle
\dot{\tilde{\theta}}_k=\Gamma(\tilde{\theta}_k)+\hspace{-0.4cm} \sum_{\substack{j=1\\j\neq \{k,i-1,i\}}}^{N-1} \hspace{-0.4cm} \Gamma(\tilde{\theta}_k-\tilde{\theta}_j)+2\Gamma(\tilde{\theta}_k-\tilde{\theta}_i)- \hspace{-0.4cm}\sum_{\substack{j=1\\j\neq \{i-1,i\}}}^{N-2} \hspace{-0.4cm} \Gamma(-\tilde{\theta}_j)-2\Gamma(-\tilde{\theta}_i) & k\neq \{i-1,i\}\\\\
\displaystyle
\dot{\tilde{\theta}}_i=\Gamma(\tilde{\theta}_i)+\sum_{\substack{j=1\\j\neq \{i-1,i\}}}^{N-1} \Gamma(\tilde{\theta}_i-\tilde{\theta}_j)+\Gamma(0) - \sum_{\substack{j=1\\j\neq \{i-1,i\}}}^{N-2} \Gamma(-\tilde{\theta}_j)-2\Gamma(-\tilde{\theta}_i)  &
\end{cases}
\end{equation*}
and the proof follows on similar lines as the proof of Theorem \ref{theo_contract_consensus}. The terms $T_i^{(k)}$ in \eqref{T_k_tot}, with $\mathcal{K}(k) \neq i$, are replaced by $2\,T_i^{(k)}$ and the term $\Gamma'(\xi_{-i})(\theta_i-\psi_i)$ of $T_\Sigma$ is replaced by $2\Gamma'(\xi_{-i})(\theta_i-\psi_i)$. The reader can easily verify that these modifications do not affect the validity of the proof. (In the proof, the above-mentioned terms are exclusively combined between themselves.)\\
\textbf{In the cone $\mathbf{\mathcal{C}'_N}$.} The situation is similar to the case of the cone $\mathcal{C}'_1$.
\end{proof}

In addition, an equivalent result also holds for every intersection of several cones $\bar{\mathcal{C}}'_i$, intersection that corresponds to the synchronization of three or more oscillators and that is therefore invariant under the dynamics \eqref{phase_dyn_modified}. The result is summarized in the following corollary. (The proof follows on similar lines as the proof of Corollary \ref{corol} and is not detailed in the present paper.)

\begin{corollary}
\label{corol2}
Let $I$ denote a non-empty subset of $\{1,\dots,N\}$. Under the same assumptions as Theorem \ref{theo_contract_consensus}, the dynamics \eqref{phase_dyn_modified} defined in the intersection $\bar{\mathcal{C}}'_I=\bigcap_{i\in I} \bar{\mathcal{C}}'_i$ are either contracting (if $\Gamma'<0$) or expanding (if $\Gamma'>0$) in the interior of $\bar{\mathcal{C}}'_I$ with respect to \eqref{1_norm}.
\end{corollary}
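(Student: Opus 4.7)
The plan is to reduce to the setting of Theorem \ref{theo_contract_consensus} by exploiting the cluster structure induced by $I$. Each $i \in I$ imposes $\tilde{\theta}_{i-1} = \tilde{\theta}_i$, so the oscillators partition into $M = N - |I|$ synchronized clusters of sizes $n_1,\dots,n_M$ with $\sum_\ell n_\ell = N$. The interior of $\bar{\mathcal{C}}'_I$ is parameterized by the $M-1$ distinct cluster phases (say $\tilde{\vartheta}_1 < \cdots < \tilde{\vartheta}_{M-1}$ in the rotating frame of the first cluster, with the convention $\tilde{\vartheta}_0 = 0$), and the ordering is preserved by the dynamics for exactly the same reason as in Theorem \ref{theo_contract_consensus}.

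The first step is to write down the reduced $(M-1)$-dimensional dynamics explicitly. A direct substitution of $\tilde{\theta}_{i-1} = \tilde{\theta}_i$ for all $i \in I$ into \eqref{phase_dyn_modified} yields, for each cluster $\ell$,
\begin{equation*}
\dot{\tilde{\vartheta}}_\ell = n_1\,\Gamma(\tilde{\vartheta}_\ell) + \sum_{\substack{m=1\\m\neq \ell}}^{M-1} n_m\,\Gamma(\tilde{\vartheta}_\ell - \tilde{\vartheta}_m) - \sum_{m=1}^{M-1} n_m\,\Gamma(-\tilde{\vartheta}_m) + c_\ell\,,
\end{equation*}
where $c_\ell$ groups the constant contributions of self-couplings inside the clusters. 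This has exactly the same structural form as \eqref{phase_dyn_modified}, the only differences being the additive constants $c_\ell$ (which cancel when computing $\dot{\tilde{\vartheta}}_\ell - \dot{\tilde{\psi}}_\ell$ between two orbits) and the positive integer weights $n_m$ attached to each coupling term.

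The second step is to repeat verbatim the proof of Theorem \ref{theo_contract_consensus} on these reduced dynamics. Each term $T^{(k)}_j$ appearing in \eqref{T_k_tot} now picks up the factor $n_{j}$ (or $n_\ell$ for the diagonal term $T^{(k)}_{\mathcal{K}(k)}$, and $n_j$ for the contribution to $T_\Sigma$). The crucial observation is that the pairing strategy used in the proof (a positive term $T^{(k)}_j$ is associated with $T^{(k+1)}_j$, or $T^{(N_c)}_j$ with $T^{(1)}_j$, or with the corresponding term in $T_\Sigma$) always matches terms sharing the same spatial index $j$, hence the same weight $n_j$. Since each paired sum $T^{(k)}_j + T^{(k\pm 1)}_j$ is strictly negative in the original proof, multiplying both by the same positive integer $n_j$ preserves strict negativity. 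The monotonicity relations \eqref{rel_xi_1}--\eqref{rel_xi_3} and \eqref{rel_xi_4_a}--\eqref{rel_xi_4_b}, which rely only on $\Gamma'' \leq 0$ (or $\geq 0$) and on the ordering of cluster phases, remain valid in the interior of $\bar{\mathcal{C}}'_I$ where the discontinuity of $\Gamma$ is not encountered.

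The main obstacle is the bookkeeping, that is, verifying that the weights $n_m$ actually ride along consistently with the pairing and never destroy the sign argument. This amounts to checking that (i) the diagonal weight $n_\ell$ appears on both $T^{(k)}_{\mathcal{K}(k)}$ and $T^{(k+1)}_{\mathcal{K}(k+1)}$ only when $\mathcal{K}(k)$ and $\mathcal{K}(k+1)$ correspond to \emph{the same} cluster (otherwise the two diagonal terms live in different clusters and are simply never paired together in the proof), and (ii) the weight attached to the cross term $T^{(k)}_j$ depends only on $j$. Both facts follow directly from the form of the reduced dynamics displayed above, so the proof goes through with purely notational changes, exactly as in Corollary \ref{corol}.
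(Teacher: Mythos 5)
Your plan is correct and matches the paper's intended argument: the paper omits this proof entirely, stating only that it follows the lines of Corollary \ref{corol}, and your cluster-weight bookkeeping (weights $n_m$ riding along with the pairing) is exactly the generalization of the factor-of-$2$ modifications used there. One small clarification: the diagonal terms $T^{(k)}_{\mathcal{K}(k)}$ all carry the \emph{same} weight $n_1$ (the size of the reference cluster), which is why the diagonal pairing survives even though $\mathcal{K}(k)$ and $\mathcal{K}(k+1)$ index different clusters --- your displayed reduced dynamics already make this evident, so your check (i) resolves trivially.
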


\section{Collective behaviors of monotone oscillators}
\label{sec_behavior}

Theorem \ref{theo_contract_consensus} provides a contraction property for networks of monotone oscillators. This property is now shown to determine the asymptotic behavior of the network.

\begin{theorem}
\label{theo_behav}
Consider a network of $N$ monotone phase-coupled oscillators with the dynamics \eqref{gen_form_Kuramoto}. Provided that the coupling function has a curvature of constant sign on $(0,2\pi)$, for almost every initial condition,
\begin{itemize}
\item the oscillators asymptotically converge to the incoherent splay state configuration $\tilde{\theta}^*_k=k\frac{2\pi}{N}$, with $k\in\{1,\dots,N-1\}$, if the coupling function is strictly decreasing on $(0,2\pi)$;
\item the oscillators achieve perfect synchronization $\theta^*_1(t)=\theta^*_2(t)=\dots=\theta^*_N(t)$ (i.e. $\tilde{\theta}^*_k\in\{0,2\pi\}$) in finite time if the coupling function is strictly increasing on $(0,2\pi)$.
\end{itemize}
\end{theorem}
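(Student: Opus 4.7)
The plan is to proceed in three steps. First I identify the splay state as the candidate attractor and argue it is the unique equilibrium in $\mathcal{C}$. The configuration $\tilde{\theta}^*_k = k\,2\pi/N$ corresponds in the original coordinates \eqref{gen_form_Kuramoto} to an evenly spaced rotating wave: by the $2\pi$-periodicity of $\Gamma$ and the rotational symmetry of the splay state, the sum $\sum_{j\neq k}\Gamma(\theta_k-\theta_j)$ is independent of $k$, so all oscillators share the common frequency $\omega^* = \omega + \sum_{m=1}^{N-1}\Gamma(m\,2\pi/N)$. Hence $\mathbf{\tilde{\Theta}}^*\in\mathcal{C}$ is an equilibrium of \eqref{phase_dyn_modified}, and Theorem \ref{theo_contract_consensus} (either sign) excludes a second equilibrium in $\mathcal{C}$, since two such equilibria would violate the strict monotonicity of their mutual distance.

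The second step is the boundary analysis, which is the crux of the argument. For two consecutive oscillators with gap $\epsilon = \tilde{\theta}_{k+1}-\tilde{\theta}_k \to 0^+$, a direct computation from \eqref{phase_dyn_modified} yields
\begin{equation*}
\dot{\tilde{\theta}}_{k+1}-\dot{\tilde{\theta}}_k = \Gamma(\epsilon)-\Gamma(-\epsilon) + O(\epsilon) \xrightarrow{\epsilon \to 0^+} \Gamma(0^+)-\Gamma(2\pi^-),
\end{equation*}
and an analogous one-sided limit holds at the faces $\tilde{\theta}_1 = 0$ and $\tilde{\theta}_{N-1}=2\pi$. By Assumption \ref{assump_monotone} and the $2\pi$-periodicity noted in the remark that follows it, this limit is a nonzero constant: strictly positive when $\Gamma'<0$ and strictly negative when $\Gamma'>0$. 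Crucially, its value does not depend on the positions of the remaining oscillators, so the boundary vector field is uniformly repelling (respectively attracting) at a rate bounded below by a fixed positive constant.

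For the third step I combine the contraction/expansion of Theorem \ref{theo_contract_consensus} with this boundary behaviour. In the decreasing case, uniform boundary repulsion keeps any orbit starting in $\mathcal{C}$ inside a compact subset $K\subset\mathcal{C}$, so $V(\mathbf{\tilde{\Theta}}) = \|\mathbf{\tilde{\Theta}}-\mathbf{\tilde{\Theta}}^*\|_{(1)}$ is a strict Lyapunov function on $K$; LaSalle's invariance principle together with the uniqueness of the equilibrium in $\mathcal{C}$ then yields $\mathbf{\tilde{\Theta}}(t)\to\mathbf{\tilde{\Theta}}^*$. In the increasing case, the uniform boundary attraction forces the minimum gap $g(t) = \min_k(\tilde{\theta}_{k+1}(t)-\tilde{\theta}_k(t))$ (with the conventions $\tilde{\theta}_0=0$, $\tilde{\theta}_N=2\pi$) to reach $0$ in time at most $g(0)/c$, where $c>0$ is the boundary rate of Step~2; the orbit then enters a stratum $\bar{\mathcal{C}}'_i$ whose reduced dynamics, per Corollary \ref{corol}, are structurally identical to \eqref{phase_dyn_modified} and again expanding. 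Iterating via Corollary \ref{corol2}, each finite-time synchronization event reduces the active dimension by one, and after at most $N-1$ such events all oscillators coincide. The measure-zero initial conditions absorbed by \guillemets{almost every} are orbits born on a strict sub-stratum $\bar{\mathcal{C}}'_I$ in the contracting case (which converge to reduced splay states within their stratum) and the unstable equilibria of each stratum in the expanding case.

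The main obstacle is precisely the second step: Theorem \ref{theo_contract_consensus} controls only the distance between two orbits and is silent about the distance of a single orbit to $\partial\mathcal{C}$. The discontinuity of $\Gamma$, which Assumption \ref{assump_monotone} combined with $2\pi$-periodicity forces, is the missing ingredient that quantifies the boundary behaviour and thereby selects between finite-time synchronization and asymptotic convergence to the splay state.
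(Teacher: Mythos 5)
Your overall route coincides with the paper's: the splay state is identified as the unique equilibrium in $\mathcal{C}$, the boundary computation in your Step~2 is exactly the paper's display \eqref{velocity_bound}, the decreasing case is settled by combining boundary repulsion with the contraction of Theorem \ref{theo_contract_consensus} (you via a Lyapunov/LaSalle argument with $V=\|\cdot\,-\mathbf{\tilde{\Theta}}^*\|_{(1)}$, the paper via the contraction mapping theorem applied to the time-$\Delta t$ map --- an immaterial difference), and the increasing case proceeds by a cascade of finite-time pairwise synchronizations through the strata $\bar{\mathcal{C}}'_I$ using Corollaries \ref{corol} and \ref{corol2}. Your explicit verification that the splay state is an equilibrium, and the uniqueness argument via the strict monotonicity of the distance between two equilibrium orbits, are clean additions the paper leaves implicit.

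The one step that fails as written is the claim, in the increasing case, that the minimum gap $g(t)$ reaches $0$ in time at most $g(0)/c$. The rate $c=|\Gamma(2\pi^-)-\Gamma(0^+)|$ is only the one-sided limit of $\dot{\tilde{\theta}}_{k+1}-\dot{\tilde{\theta}}_k$ as the gap tends to $0^+$; away from the boundary $\dot g$ need not be negative at all. Indeed the splay state is an equilibrium with $g\equiv 2\pi/N$, and for initial conditions close to it the time needed to escape its neighbourhood is unbounded, so no bound of the form $g(0)/c$ can hold. The repair is precisely the paper's ordering of the two ingredients: first use the \emph{expansion} of Theorem \ref{theo_contract_consensus} to conclude that $\|\mathbf{\tilde{\Theta}}(t)-\mathbf{\tilde{\Theta}}^*\|_{(1)}$ increases monotonically, so the orbit is driven toward $\partial\mathcal{C}$; only once it is in a neighbourhood of the boundary does \eqref{velocity_bound} supply a velocity bounded away from zero and convert asymptotic approach into finite-time arrival. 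The same correction is needed at every stage of your cascade, since each stratum contains its own unstable reduced splay state. Everything else --- the compact invariant set $K$ in the decreasing case and the identification of the measure-zero exceptional set --- is sound and matches the paper.
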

\begin{proof}
\textbf{\textbf{Convergence to the splay state if $\mathbf{\Gamma'<0}$.}}
We first note that an orbit $\mathbf{\Phi}(\mathbf{\tilde{\Theta}},t)$ of \eqref{phase_dyn_modified} with an initial condition $\mathbf{\tilde{\Theta}}$ in the open cone $\mathcal{C}$ cannot reach the boundary of $\mathcal{C}$. Indeed, straightforward computations show the repelling property of the boundary:
\begin{equation}
\label{velocity_bound}
\begin{array}{ll}
\dot{\tilde{\theta}}_1 \approx \Gamma(0^+)-\Gamma(2\pi^-)>0 & \textrm{for } \tilde{\theta}_1\approx 0\,,\\
\dot{\tilde{\theta}}_{k+1}-\dot{\tilde{\theta}}_k \approx \Gamma(0^+)-\Gamma(2\pi^-)>0 & \textrm{for } \tilde{\theta}_k\approx \tilde{\theta}_{k+1}\,, \\
\dot{\tilde{\theta}}_{N-1} \approx \Gamma(2\pi^-)-\Gamma(0^+)<0 & \textrm{for } \tilde{\theta}_{N-1}\approx 2\pi\,.
\end{array}
\end{equation}
Then, Theorem \ref{theo_contract_consensus} implies that the dynamics \eqref{phase_dyn_modified} are contracting in $\mathcal{C}$ for all $t$ and, for any small fixed $\Delta t$, the discrete-time mapping $\mathbf{\tilde{\Theta}}\mapsto \mathbf{\Phi}(\mathbf{\tilde{\Theta}},\Delta t)$ is also contracting in $\mathcal{C}$. From the contraction mapping theorem, it follows that, for all initial conditions in $\mathcal{C}$ (the interior of the cone), the solutions of \eqref{phase_dyn_modified} asymptotically converge to the unique fixed point lying in $\mathcal{C}$. This unique fixed point corresponds to the splay state $\tilde{\theta}^*_k=k\frac{2\pi}{N}$, which concludes the first part of the proof.\\
\textbf{\textbf{Convergence to synchronization when $\mathbf{\Gamma'>0}$.}}
Consider an orbit $\mathbf{\Phi}(\mathbf{\tilde{\Theta}},t)$ of \eqref{phase_dyn_modified}, with $\mathbf{\tilde{\Theta}}\in \mathcal{C}$ that is not the fixed point $\tilde{\theta}^*_k=k\frac{2\pi}{N}$. Theorem \ref{theo_contract_consensus} implies that the distance between the orbit and the fixed point monotonically increases. As a consequence, the orbit converges toward the boundary of $\mathcal{C}$ (corresponding to the synchronization of at least two oscillators). In addition, \eqref{velocity_bound} shows that the orbit approaches the boundary with the finite velocity $|\Gamma(2\pi^-)-\Gamma(0^+)|$  and therefore reaches the boundary in finite time.

The orbit subsequently evolves within a $(N-2)$-dimensional invariant cone $\mathcal{C}_i'$. Given Corollary \ref{corol}, the reduced dynamics in $\mathcal{C}_i'$ satisfies the contraction property. It follows that any fixed point lying in $\mathcal{C}_i'$ is unstable, so that, for almost every initial condition, the orbit could not have reached $\mathcal{C}_i'$ through a fixed point. Next, the same argument as above proves that the orbit reaches in finite time the boundary of $\mathcal{C}'_i$, an event that corresponds to another finite-time synchronization of (at least) two oscillators.

The orbit subsequently evolves within a $(N-3)$-dimensional cone $\mathcal{C}_I'$. Given Corollary \ref{corol2}, the reduced dynamics in $\mathcal{C}_I'$ still satisfies the contraction property, so that the above argument can be repeated.

Finally, the argument is repeated as many times as a pairwise synchronization occurs and after (at most) $N-1$ successive pairwise synchronizations, the network achieves full synchronization in finite time.
\end{proof}

The two asymptotic collective behaviors of monotone oscillators are illustrated in Figure \ref{fig_proof_synchro}. Monotone phase-coupled oscillators obtained through the averaging of pulse-coupled leaky integrate-and-fire oscillators (see \cite{Kuramoto3}) satisfy the mild curvature assumption on $\Gamma$, so that the result applies.\\

\begin{figure}[h]
\begin{center}
\subfigure[Incoherent splay state]{\includegraphics[width=8cm]{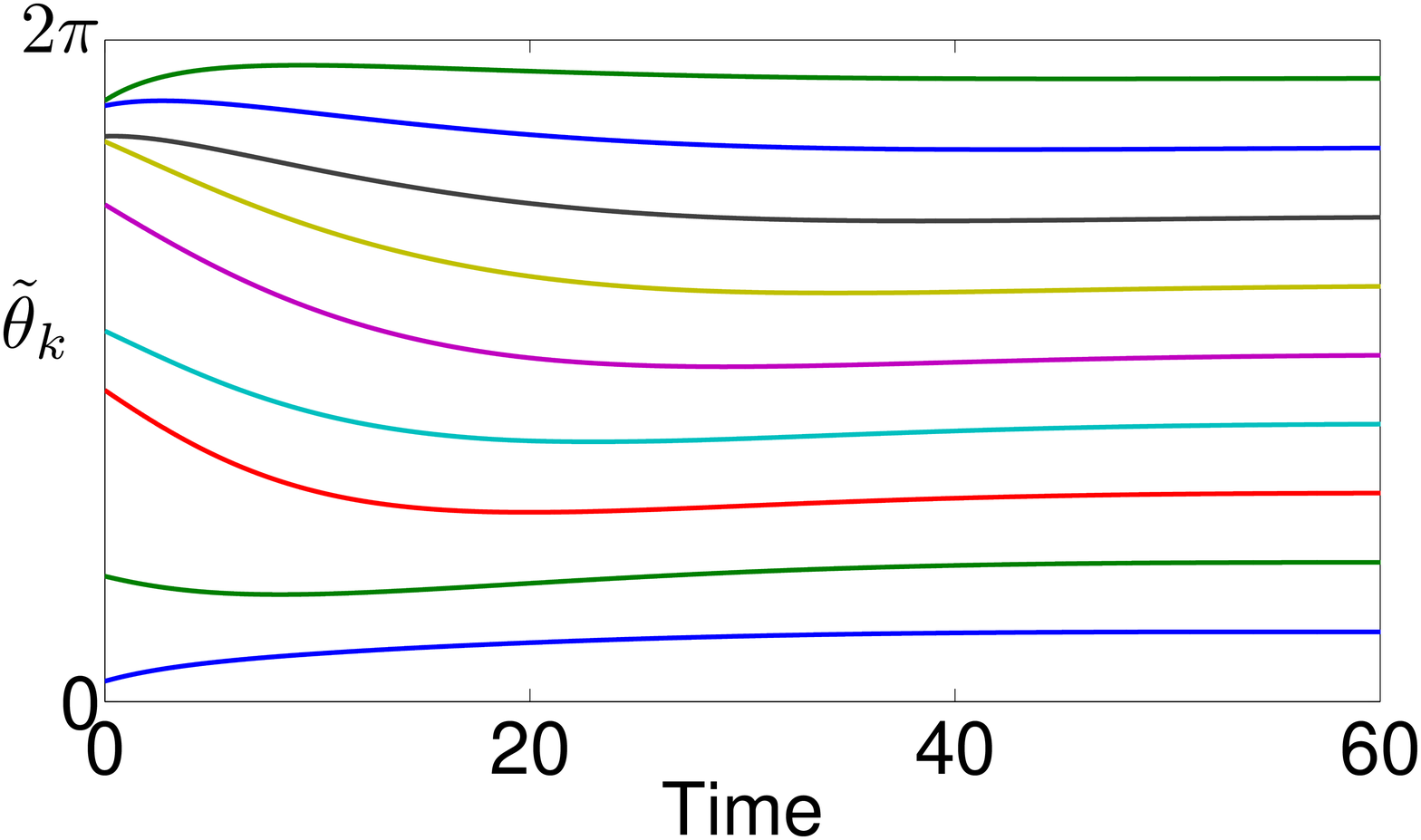}}
\subfigure[Synchronization]{\includegraphics[width=8cm]{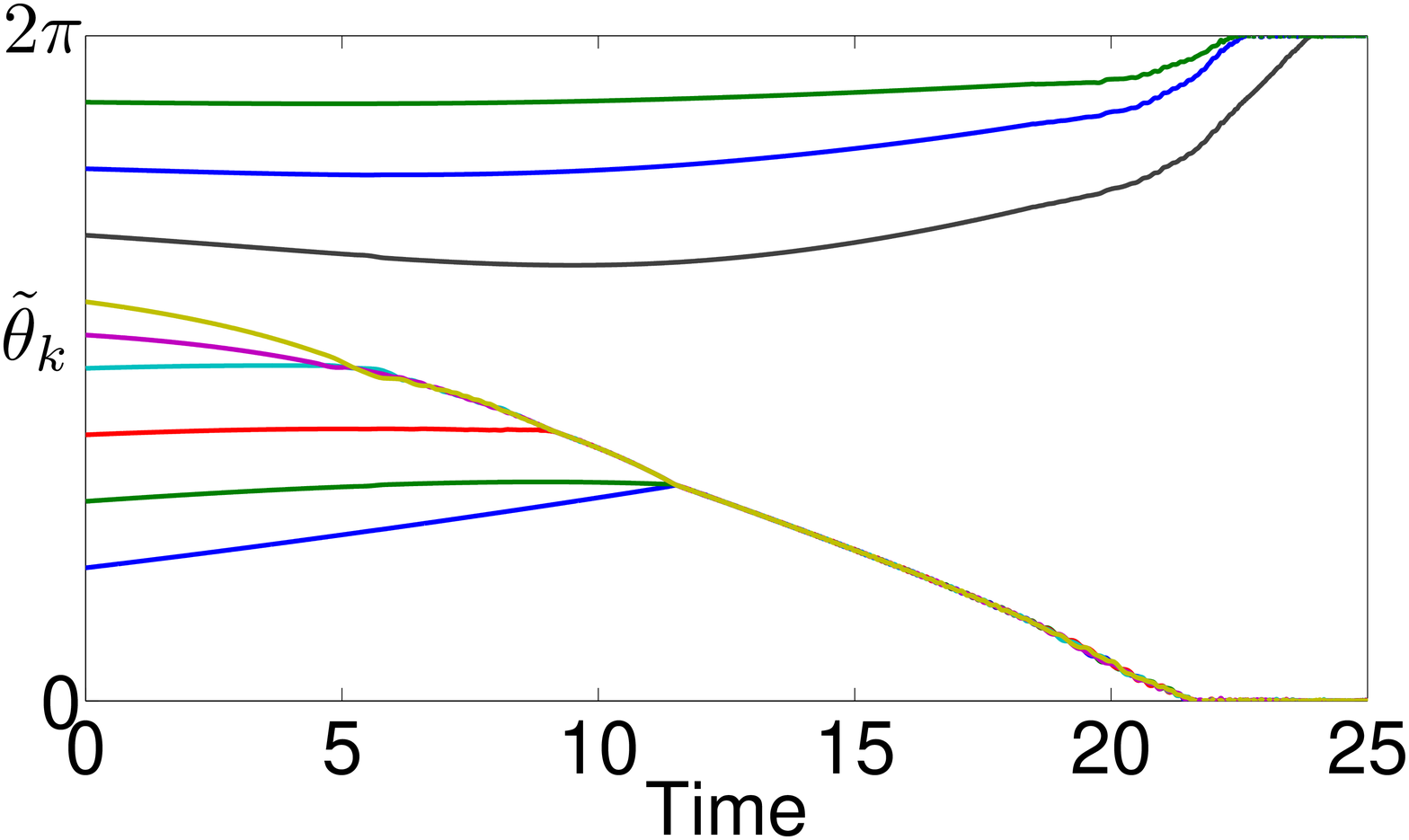}}
\caption{$N=10$ monotone phase-coupled oscillators exhibit two opposite asymptotic behaviors. (a) With the decreasing coupling function $\Gamma(\theta)=1/N(0.1+e^{-\theta})$, the oscillators asymptotically converge to the splay state. (b) With the increasing coupling function $\Gamma(\theta)=-1/N(0.1+e^{-\theta})$, several pairwise synchronizations occur, until the network achieves perfect synchronization in finite time.}
\label{fig_proof_synchro}
\end{center}
\end{figure}

Theorem \ref{theo_behav} is an illustration that contraction has strong implications on the asymptotic behavior. The global behavior of a network of monotone oscillators is in fact reminiscent of the global behavior of Kuramoto model, which is a gradient system. An important feature of the present model with respect to Kuramoto model is brought up by the discontinuity of the coupling function: the synchronization takes place in \emph{finite time}, and the splay state is an \emph{isolated fixed point}, whereas it is a $N-3$ dimensional manifold in Kuramoto model. A consequence of that difference is that the asymptotic behavior of \eqref{gen_form_Kuramoto} is robust to small heterogeneity in the natural frequencies $\omega$ in firing monotone oscillators \cite{Mauroy} whereas the asymptotic dynamics of Kuramoto model can be highly complex even for small heterogeneities \cite{Strogatz}.

\section{Conclusion}
\label{sec_conclu}

We have investigated the global stability properties of populations of monotone phase-coupled oscillators, thereby complementing local results presented in the earlier study \cite{Kuramoto3}. In particular, we showed that monotone oscillators only display two asymptotic collective behaviors: for almost all initial conditions, monotone oscillators either synchronize in finite time or asymptotically converge to the unique splay configuration.

The global stability analysis relies on a strong contraction property of the dynamics, which is the main result of this paper. Interestingly, the contraction is not captured with respect to a quadratic norm, but through a $1$-norm that has the interpretation of a total variation distance. In a general context, this result stresses the key role of $1$-norms on cones to connect the monotonicity of a system to its stability properties.

\section*{Acknowledgments}

This paper presents research results of the Belgian Network DYSCO (Dynamical Systems, Control, and Optimization), funded by the Interuniversity Attraction Poles Programme, initiated by the Belgian State, Science Policy Office. The scientific responsibility rests with its authors. A. Mauroy holds a postdoctoral fellowship from the Belgian American Educational Foundation.



\bibliographystyle{elsarticle-num}



\end{document}